\documentclass[12pt]{article}
\usepackage[utf8]{inputenc}
\usepackage{amsthm}
\usepackage{amsmath}

\makeatletter
\def\@seccntformat#1{\csname the#1\endcsname.\ } % the column after a section number
\makeatother

\newtheorem{lemma}{Lemma}
\newtheorem{theorem}{Theorem}
\newtheorem{proposition}{Proposition}
\newtheorem{corollary}{Corollary}
\theoremstyle{definition}
\newtheorem{remark}{Remark}

\title{The Steiner triple systems of order 21 with a transversal subdesign TD(3,6)%
\thanks{This research is supported by
the National Natural Science Foundation of China (61672036),
Excellent Youth Foundation of Natural Science Foundation of Anhui Province (No.1808085J20),
and the 
Program of Fundamental Scientific Research of the SB RAS No I.5.1., project No 0314-2019-0016.}
}
\author{%
 Yue Guan%
 \thanks{Y. Guan is with the School of Mathematical Sciences, Anhui University, Hefei, 230601, China}%
,
 Minjia Shi%
 \thanks{M. Shi is with the School of Mathematical Sciences, Anhui University, Hefei, 230601, China; corresponding author, e-mail: smjwcl.good@163.com}
,
Denis S. Krotov%
\thanks{D. Krotov is with the Sobolev Institute of Mathematics, Novosibirsk, 630090, Russia}%
}
\date{}

\def\aA{\mathcal A}
\def\bB{\mathcal B}
\def\cC{\mathcal C}
\def\dD{\mathcal D}
\def\tT{\mathcal T}
\def\gG{\mathcal G}
\def\pP{\mathcal P}

\def\STS{{\rm STS}}
\def\TD{{\rm TD}}

\begin{document}

\maketitle

\begin{abstract}
We prove several structural properties of Steiner triple systems (STS) of order $3w+3$ that include one or more transversal subdesigns TD$(3,w)$. Using an exhaustive search, we find that there are $2004720$ isomorphism classes of STS$(21)$ including a subdesign TD$(3,6)$, or, equivalently, a $6$-by-$6$ latin square.

Keywords: Steiner triple system, subdesign, latin square, transversal design.

MSC: 51E10
\end{abstract}

\section{Introduction}

A \emph{Steiner triple system} of order $v$, or \emph{\STS$(v)$}, is a pair $(S,\bB)$ from a finite set $S$ (called the support, or the point set, of the \STS)
of cardinality $v$ and a collection $\bB$ of $3$-subsets of $S$, called \emph{blocks}, such that
every two distinct elements of $S$ meet in exactly one block.
A transversal design \TD$(k,w)$ (in this paper, we only consider the case $k=3$) is a triple $(S, \gG, \bB)$ that consists of a point set $S$ of cardinality $kw$, a partition $\gG$ of $S$ into $k$ subsets, \emph{groups}, of cardinality $w$, and a collection $\bB$ of $k$-subsets of $S$, \emph{blocks}, such that every block intersects every group in exactly one point and every two points in different groups
meet in exactly one block.
As the support and (in the case of \TD) the groups are uniquely determined by the block set, it is convenient to identify the system, STS or TD, with its block set.
With this agreement, it is correct to say that an STS $\bB$ can include, as a subset,
some STS or TD $\cC$, in which case $\cC$ is called
a \emph{sub-STS} or \emph{sub-TD} of $\bB$, respectively.
Two systems, STS or TD, are called \emph{isomorphic}
if there is a bijection between their supports,
an \emph{isomorphism},
that sends the blocks of one system to the blocks of the other.
An isomorphism of a system $\bB$ to itself is called an \emph{automorphism};
the set of all automorphisms
of $\bB$ is denoted by $\mathrm{Aut}(\bB)$.

Transversal designs \TD$(3,w)$ are equivalent to latin $w \times w$ squares and known to exist for every natural $w$.
The isomorphism classes of \TD$(3,w)$ correspond to the so-called main classes of latin squares;
their number is known for $w$ up to $11$, see \cite{HKO:2011:latin}.
Steiner triple systems \STS$(v)$   exist if and only if $v\equiv 1,3 \bmod 6$, see, e.g., \cite{ColMat:Steiner},
the necessary condition being given by simple counting arguments.

The number of isomorphism classes of Steiner triple system is known for order up to $19$ \cite{KO:STS19}.
The classification of STS of higher orders is possible only
with additional restrictions on the structure of STS.
Among such restrictions,
the most popular are restrictions on the automorphisms,
see e.g.
\cite{Bays:23}, \cite{Colbourn:79}, \cite{Denniston:80}, \cite{MPR:81}, \cite{PhRo:81}, \cite{Tonchev:87},
\cite{KapTop:92},
restrictions on the maximal rank of the system
\cite{Osuna:2006}, \cite{JMTW:STS27},
requirement for the system to include a subsystem with certain
fixed parameters.

Stinson and Seah \cite{StiSea:1986}
found that there are $284457$
\STS$(19)$ with sub-\STS$(9)$.
Kaski,
\"Osterg{\aa}rd,
Topalova, and
Zlatarski
\cite{KOTZ:2008}
classified \STS$(19)$
with sub-\STS$(7)$
and \STS$(21)$
that include three sub-\STS$(7)$
with disjoint supports
(the last class coincides with the class
of \STS$(21)$ of $3$-rank at most $19$).
Recently, Kaski, \"Osterg{\aa}rd, and Popa
\cite{KOP:2015:subSTS}
counted all \STS$(21)$ with sub-\STS$(9)$
(and also, the \STS$(27)$ with sub-\STS$(13)$).
The number $ 12661527336 $
(respectively, $1356574942538935943268083236$)
of isomorphism classes of such systems
is too large to admit any constructive enumeration; in particular, one cannot
computationally check any required property
for all these classes.

In the current paper, we classify the \STS$(21)$ with subdesigns \TD$(3,6)$, or saying
in a different way, the \STS$(21)$ that include a latin $6\times 6$ square.
We establish that there are $2004720$ isomorphism classes of
Steiner triple systems of order $21$ with transversal subdesigns on $3$ groups of size $6$,
including $599$ systems with exactly three sub-TD$(3,6)$ and $12$
systems with exactly seven sub-TD$(3,6)$.
Considered class contains $393$ non-isomorphic resolvable STS;
none of them is doubly-resolvable.

In the next section, we prove some facts about the Steiner triple systems of order $3w+3$
that have a transversal subdesign on three groups of size $w$, mainly focused on the case $w=6$.
In Section~\ref{s:calc}, we present the results of computer-aided classification of \STS$(21)$
with a subdesign \TD$(3,6)$, including Table~\ref{t:1},
which contains the number of found isomorphism classes classified by the number of subdesigns
\TD$(3,6)$, \STS$(9)$, and the number of automorphisms.
Section~\ref{s:valid} contains a double-counting argument that validates the results of computing.
In Section~\ref{s:resol}, we discuss the resolvability of the found STS and show that
STS$(21)$ with sub-\TD$(3,6)$ and only one sub-\STS$(9)$ cannot be resolvable.

%=================================================
%=================================================
%=================================================
\section{Steiner triple systems with transversal subdesigns}\label{s:theory}
We start with some theoretical considerations.
If an \STS$(v)$ has a sub-\TD$(3,w)$, then $v=3w+u$,
where $u\equiv 1,3$ if $w$ is even and $u\equiv 0,4$ if $w$ is odd.
The case $u=0$ corresponds to the Wilson-type \STS$(3w)$ \cite{Wilson:74:STS};
readily, such a system is the union of
 three \STS$(w)$ with mutually disjoint supports
 and a transversal design  \TD$(3,w)$.
The case $u=1$ corresponds to the Wilson-type \STS$(3w+1)$ \cite{Wilson:74:STS};
again, it is easy to see that such a system is a union of
three \STS$(w+1)$ whose supports have one point in common
and a transversal design \TD$(3,w)$.

The next case is $u=3$. We introduce a related concept.
A subset $\cC$ of an STS $\bB$ is called an \emph{almost-sub-STS} if
$\cC=\cC'\backslash \{T\}$
for some STS $\cC'$ and a triple $T$ of $\cC'$ (note that $T$ is not required to be a block of $\bB$);
this triple is called \emph{missing} for the almost-sub-STS  $\cC$.

\begin{lemma}\label{l:3w+3}
Let $(A\cup B \cup C\cup D, \mathcal B)$ be an \STS$(3w+3)$ with a sub-TD
$(A\cup B \cup C, \{A, B, C\}, \mathcal T)$,
where $|A|=|B|=|C|=w$ and $|D|=3$.
Then $\mathcal B = \mathcal T \cup \mathcal B_A \cup \mathcal B_B \cup \mathcal B_C$,
where the supports of $\mathcal B_A$, $\mathcal B_B$, $\mathcal B_C$ are $A\cup D$, $B\cup D$, $C\cup D$
respectively, and two of them are almost-sub-STS with the missing triple $D$,
the remaining one being a sub-STS (in particular, $w+3\equiv 1,3 \bmod 6$).
\end{lemma}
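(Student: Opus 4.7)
My plan is to first classify the blocks of $\mathcal B$ by how they intersect the partition $(A,B,C,D)$. For every pair $\{a,b\}$ with $a\in A$ and $b\in B$ (and analogously for the pairs $A,C$ and $B,C$), the pair is already covered by a unique block of the sub-TD $\mathcal T$; by the STS property no other block of $\mathcal B$ can contain this pair, so no block of $\mathcal B\setminus\mathcal T$ meets two distinct groups among $A,B,C$. Combined with the fact that $\mathcal T$-blocks meet each of $A,B,C$ in exactly one point, this forces every block of $\mathcal B\setminus\mathcal T$ either to be contained in $X\cup D$ for some $X\in\{A,B,C\}$ or to equal $D$ itself. Let $\mathcal B^*_X$ denote the set of blocks of $\mathcal B$ supported in $X\cup D$; the same argument shows that any pair of $X\cup D$ with at least one endpoint in $X$ is covered by a block of $\mathcal B^*_X$.

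The heart of the proof is to identify, when $D\notin\mathcal B$, which group the three ``diagonal'' blocks $T_{ij}=\{d_i,d_j,x_{ij}\}$ (covering the pairs $\{d_i,d_j\}\subset D$) point to; I will prove by a parity argument that $x_{12},x_{13},x_{23}$ all lie in the same group of $\{A,B,C\}$. Fix $d_1$; an STS$(3w+3)$ has replication number $r=(3w+2)/2$, which forces $w$ to be even. By the classification above, the blocks through $d_1$ other than $T_{12}$ and $T_{13}$ are all of the form $\{d_1,y,y'\}$ with $y,y'$ in a common group of $\{A,B,C\}$. Counting the $w$ pairs $\{d_1,a\}$ with $a\in A$ in two ways yields
\[
2n_A(d_1)+[x_{12}\in A]+[x_{13}\in A]=w,
\]
where $n_A(d_1)$ counts blocks of the form $\{d_1,a,a'\}$. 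Parity forces $[x_{12}\in A]+[x_{13}\in A]\in\{0,2\}$, so $x_{12}$ and $x_{13}$ agree about lying in $A$; the same holds for $B$ and $C$, hence $x_{12},x_{13}$ lie in the same group. A symmetric argument at $d_2$ puts $x_{23}$ into that group as well.

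To finish, suppose without loss of generality that the common group is $A$. Then $\mathcal B^*_A$ contains $T_{12},T_{13},T_{23}$, so combined with the pair-covering established above it is a sub-STS on $A\cup D$; meanwhile $\mathcal B^*_B$ and $\mathcal B^*_C$ cover every pair of $B\cup D$ (resp.\ $C\cup D$) except the three inside $D$, so each becomes an STS upon adjoining the triple $D$, i.e., each is an almost-sub-STS with missing triple $D$. In the remaining case $D\in\mathcal B$, all three $\mathcal B^*_X$ share $D$ as a block, and each is directly seen to be a sub-STS on $X\cup D$; to recover the asserted \emph{partition} $\mathcal B=\mathcal T\cup\mathcal B_A\cup\mathcal B_B\cup\mathcal B_C$, one keeps $D$ in only one of the $\mathcal B_X$, leaving the other two as almost-sub-STS missing $D$. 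In either case the existence of a sub-STS on $w+3$ points forces $w+3\equiv 1,3\bmod 6$.

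The main obstacle I anticipate is the parity step: its correctness depends on having the block-type classification from the first paragraph tight enough to rule out mixed blocks through $d_1$, and on the evenness of $w$ for the mod-$2$ argument. Once the common-group claim is in hand, the rest is bookkeeping.
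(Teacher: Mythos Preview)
Your proof is correct, but it follows a different route from the paper's. The paper, after the same block-classification step, argues in the case $D\notin\mathcal B$ by a \emph{global} count: the blocks of $\mathcal B_A\cup\mathcal B_B\cup\mathcal B_C$ together cover exactly $\frac{3}{2}w^2+\frac{15}{2}w+3$ pairs, while each $\mathcal B_X$ covers between $\frac{1}{2}w^2+\frac{5}{2}w$ and $\frac{1}{2}w^2+\frac{5}{2}w+3$ pairs; since that count must be divisible by $3$, a mod-$3$ analysis on $w$ forces exactly one of the three to hit the upper bound (a sub-STS) and the other two the lower (almost-sub-STS), and rules out $w\equiv 2\bmod 3$ along the way. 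Your argument is instead \emph{local}: you look directly at the three blocks $T_{ij}$ covering the pairs inside $D$ and use a parity count at each $d_i$ (exploiting that $w$ is even) to pin all three third points $x_{ij}$ to a single group. Your approach is arguably more transparent about \emph{why} a distinguished group emerges, and it extracts $w$ even directly from the replication number rather than from the mod-$3$ case split; the paper's approach, on the other hand, never needs to name the blocks $T_{ij}$ and yields the congruence $w+3\equiv 1,3\bmod 6$ as a byproduct of the same count rather than as a separate appeal to the existence of STS$(w+3)$.
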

\begin{proof}
 Assume first that $D$ is one of the blocks of $\bB$.
 In this case, it is easy to see that $\bB$ has a sub-\STS$(w+3)$ with support $A\cup D$.
 Take it as $\bB_A$.
 Similarly, $\bB$ has a sub-\STS$(w+3)$ with support $B\cup D$.
 Removing the block $D$, we obtain an almost-sub-STS $\bB_B$.
 Similarly, we find an almost-sub-STS $\bB_C$.

 It remains to consider the case $D\not\in\bB$. In this case, we divide $\bB\backslash \tT$
 into three subsets: $\bB_A$ consists of the blocks that are subsets of $A\cup D$,
 $\bB_B$ of the blocks that are subsets of $B\cup D$,
  and $\bB_C$ of  subsets of $C\cup D$ (note that any other block
  has points in at least two of $A$, $B$, $C$, and hence necessarily belongs to $\tT$).
  The blocks from $\bB_A$, $\bB_B$, and $\bB_C$ cover
  \[
\begin{split}
&\frac12|A|\cdot(|A|-1)+\frac12|B|\cdot(|B|-1)+\frac12|C|\cdot(|C|-1)+3|A|+3|B|+3|C|+3\\
&= \frac 32 w^2+\frac{15}2 w +3
\end{split}
\]
  pairs of points,
  while the blocks from $\bB_A$ (similarly, from $\bB_B$ or from $\bB_C$)
  cover at least
  $$\frac12|A|\cdot(|A|-1)+3|A| =\frac 12 w^2+\frac{5}2 w$$
  and at most
  $$\frac12|A|\cdot(|A|-1)+3|A|+3 = \frac 12 w^2+\frac{5}2 w+3$$
  of them.
  Since the number of the pairs covered by $\bB_A$
  must be divisible by $3$, we see that it is
  $$\frac 12 w^2+\frac{5}2 w\quad \mbox{or } \frac 12 w^2+\frac{5}2 w+3\quad \mbox{if } w\equiv 0,1\bmod 3,$$
  and it is $$\frac 12 w^2+\frac{5}2 w+2 \quad\mbox{if }w\equiv 2\bmod 3.$$
  The last case is impossible because  $3(\frac 12 w^2+\frac{5}2 w+2)>\frac 32 w^2+\frac{15}2 w +3$.
  We conclude that one of $\bB_A$, $\bB_B$, $\bB_C$, say $\bB_A$,
  has
  $$\frac 13\cdot \Big(\frac 12 w^2+\frac{5}2 w +3\Big) = \frac{(w+3)(w+2)}6$$
  blocks, while each of the other has one less. This means that
  $( A\cup D, \bB_A)$ is an \STS$(w+3)$ and
  $( A\cup D, \bB_B\cup\{D\})$, $( A\cup D, \bB_B\cup\{D\})$
  are also \STS$(w+3)$, which proves the statement.
\end{proof}

So, we see that if an \STS$(3w+3)$ has a sub-\TD$(3,w)$, then it is split into this sub-\TD$(3,w)$,
one sub-\STS$(w+3)$, and two almost-sub-\STS$(w+3)$. In general, there can be more than one sub-\TD$(3,w)$
and hence more than one such splittings. So, it is important to understand how these subsystems can intersect. The following lemma on the intersection of two almost-sub-STS generalizes the well-known
and obvious fact that the intersection of two sub-STS is always a sub-STS.

\begin{lemma}\label{l:2sub}
Assume that an STS $\bB$ has two almost-sub-STS $\bB'$ and $\bB''$
with the supports $S'$ and $S''$, respectively. Then
\begin{itemize}
 \item either $|S'\cap S''|=2$ and
$\bB' \cap \bB''$ is empty
 \item  or $|S'\cap S''| \equiv 1,3 \bmod 6$ and $\bB' \cap \bB''$ can be completed to \STS$(|S'\cap S''|)$ by adding
 $0$, $1$, or $2$ blocks.
\end{itemize}
\end{lemma}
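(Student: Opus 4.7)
The plan is to analyze, pair by pair inside $I := S' \cap S''$, whether the $\bB$-block through the pair belongs to $\dD := \bB' \cap \bB''$. Write $\bB' = \cC' \setminus \{T'\}$ and $\bB'' = \cC'' \setminus \{T''\}$, where $\cC'$, $\cC''$ are the \STS{} completions and $T'$, $T''$ are the missing triples. Every block of $\dD$ is automatically contained in $I$. The key local observation I would establish first is: for $\{p,q\} \subset I$ with $\bB$-block $B$, one has $B \in \bB'$ iff $\{p,q\} \not\subset T'$, since the unique $\cC'$-block through $\{p,q\}$ either is a block of $\bB'$ (and hence equals $B$) or is $T' \notin \bB'$; the symmetric statement holds for $T''$. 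Consequently the pairs in $I$ not covered by any triple of $\dD$ are exactly those in $U := \binom{T' \cap I}{2} \cup \binom{T'' \cap I}{2}$, and every other pair in $I$ is covered by a unique triple of $\dD$ lying in $I$.

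I would then split cases by $t' := |T' \cap I|$ and $t'' := |T'' \cap I|$. When $t', t'' \in \{0,1,3\}$, the natural candidate completion is to add $T'$ if $t' = 3$ and $T''$ if $t'' = 3$, identifying the two triples if they coincide. A direct check shows that these added triples (each lying in $I$ in this regime) cover $U$ exactly and over-cover nothing already in $\dD$, so the result is an \STS$(|I|)$. This forces $|I| \equiv 1,3 \bmod 6$ and uses $0$, $1$, or $2$ added triples, matching the second alternative of the lemma.

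The main obstacle is ruling out the remaining configurations. The crux is $t' = 2$ (symmetrically $t'' = 2$): the single uncovered pair $\{p,q\} = T' \cap I$ cannot be covered by inserting any triple $\{p,q,r\} \subseteq I$ without double-covering $\{p,r\}$, which already lies in a block of $\dD$. A parity argument formalises the obstruction: counting pair-incidences at $p$ forces $|I|$ even, while the same count at any $x \in I \setminus (T' \cup T'')$ forces $|I|$ odd, leaving room only for $|I| = 2$ and $\dD = \emptyset$ --- the first alternative of the lemma. The same parity idea eliminates the stray sub-case $t' = t'' = 3$ with $|T' \cap T''| = 2$, where the five uncovered pairs distribute asymmetrically between the points of $T' \cup T''$. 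Collating the surviving cases then yields the dichotomy as stated.
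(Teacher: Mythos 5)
Your overall strategy is essentially the same as the paper's: you show that the pairs of $I=S'\cap S''$ left uncovered by $\dD=\bB'\cap\bB''$ are exactly the pairs lying inside $T'\cap I$ or inside $T''\cap I$ (the paper's observation (*)), you use the parity of the number of covered pairs through each point of $I$ (the paper's observation (**)), and you finish by a case analysis on how the missing triples meet $I$. The positive cases with $t',t''\in\{0,1,3\}$ and the elimination of $t'=t''=3$ with $|T'\cap T''|=2$ are handled correctly.

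The gap is in your treatment of $t'=2$ (and symmetrically $t''=2$). The blanket conclusion that this ``leaves room only for $|I|=2$ and $\dD=\emptyset$'' tacitly assumes that $T'\cap I$ contributes the \emph{only} uncovered pair, and two subcases escape it. (a) $t'=2$, $t''=3$ with $T'\cap I\subset T''$: here the uncovered pairs are exactly the three pairs of $T''$, so $p$ lies in \emph{two} uncovered pairs and your parity count at $p$ gives $|I|$ odd, not even; the configuration is not contradictory, and the correct conclusion is the second alternative of the lemma with the single added block $T''$. This is precisely the second half of case (iii) in the paper's proof, so as written your argument would wrongly exclude a configuration the lemma must cover. (b) $t'=t''=2$ with $T'\cap I$ and $T''\cap I$ disjoint: here every point of $I$ lies in exactly one uncovered pair, so parity is consistent with $|I|=4$ and does not reduce to $|I|=2$; one needs the separate covering observation (the paper's case (v)) that the four cross pairs between $T'\cap I$ and $T''\cap I$ cannot be covered by $3$-subsets of a $4$-set without also covering one of the two forbidden pairs. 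In short, your parity claim ``counting pair-incidences at $p$ forces $|I|$ even'' is valid only when $p$ lies in exactly one uncovered pair, so the case analysis must be refined according to how $T'\cap I$ and $T''\cap I$ intersect, as the paper does.
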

\begin{proof}
Denote $D:=S'\cap S''$ and $\dD:=\bB' \cap \bB''$.
 Assume that 
 $\bB' \cup \{a',b',c'\}$ 
 is an STS and 
 $\bB' \cup \{a'',b'',c''\}$ 
 is an STS.
 So, any pair of points from $S'$ except $\{a',b'\}$,  $\{a',c'\}$,  $\{b',c'\}$ is included
 in one block of $\bB'$. Similarly, with $S''$ and $\bB''$.
 Therefore,
 \begin{itemize}
 \item[(*)] \emph{Any pair of points from $D$ different from
  $\{a',b'\}$,  $\{a',c'\}$,  $\{b',c'\}$,
   $\{a'',b''\}$,  $\{a'',c''\}$,  $\{b'',c''\}$
 is included in one block of $\dD$.}
\end{itemize}
Further,
\begin{itemize}
 \item[(**)] \emph{For every point $t$ from $D$, 
 the number $l(t)$ of pairs of points from $D$ containing
 $t$ and different from $\{a',b'\}$,  $\{a',c'\}$,  $\{b',c'\}$,
$\{a'',b''\}$,  $\{a'',c''\}$,  $\{b'',c''\}$ is even.} 
Indeed, this number is twice the number of blocks of $\dD$ containing $t$.
\end{itemize}
By easy check of all cases for the intersections
of the sets $D$, $\{a',b',c'\}$, and $\{a'',b'',c''\}$, 
one can find that if both (*) and (**) are satisfied, then one of the following assertions holds.

\begin{enumerate}
 \item[(i)] $D$ contains at most one point from $a'$, $b'$, $c'$
 and at most one point from $a''$, $b''$, $c''$. In this case, $\dD$ is an STS.
 \item[(ii)] $D$ consists of two  points from $a'$, $b'$, $c'$ or
 from $a''$, $b''$, $c''$. So, $|D|=2$.
  \item[(iii)] $D$ contains $a'$, $b'$, $c'$ and at most one of $a''$, $b''$, $c''$;
  or $D$ contains $a'$, $b'$, $c'$ and two or three of $a''$, $b''$, $c''$ that coincide with some of $a'$, $b'$, $c'$. Or, analogously,  $D$ contains $a''$, $b''$, $c''$ and at most one of $a'$, $b'$, $c'$;
  or $D$ contains $a''$, $b''$, $c''$ and two or three of $a'$, $b'$, $c'$ that coincide with some of $a''$, $b''$, $c''$. In this case, $\dD$ can be completed to an STS by adding the triple
  $\{a',b',c'\}$ or $\{a'',b'',c''\}$, respectively.
    \item[(iv)] $D$ includes both $\{a'$, $b'$, $c'\}$ and $\{a''$, $b''$, $c''\}$
    and these two sets intersect in at most one point. In this case, $\dD$ can be completed to an STS by adding the two triples  $\{a',b',c'\}$ and $\{a'',b'',c''\}$.
    \item[(v)] $|D|=4$ and two points $e'$, $f'$ from $D$ are in  $\{a',b',c'\}$, while the other two $e''$, $f''$ are in $\{a'',b'',c''\}$.
    It is easy to see that the four pairs
    $\{e',e''\}$, $\{e',f''\}$, $\{f',e''\}$, $\{f',f''\}$ cannot be covered by blocks of $\dD$ in a proper way.
\end{enumerate}
We have been convinced that the statement of the lemma holds in any non-contradictory case.
\end{proof}

\begin{remark}
In contrast to the case of sub-STS's,
the supports of two almost-sub-STS's can intersect in exactly
two points. One can easily construct such example using known embedding theorems:
any set of $3$-sets such that no pair of points meets in more than one set can always be embedded
as a subset in a Steiner triple system, whose support can in general be larger that the support
of the original triple set \cite{Linder:survey-embed}, \cite{BryHor:Lindner}.
\end{remark}

\begin{corollary}\label{c:2sub}
 {\rm (i)} Different supports of two almost-sub-\STS$(9)$ of the same STS intersect in at most $3$ points.
 {\rm (ii)} Different supports of two almost-sub-\STS$(9)$ of the same \STS$(21)$ intersect in $3$ points. These
 $3$ points form either a block of each of the two almost-sub-\STS$(9)$,
 or the missing triple of one or both of the almost-sub-\STS$(9)$.
\end{corollary}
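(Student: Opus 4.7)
The plan is to derive both parts from Lemma~\ref{l:2sub}, together with a pair-counting argument inside the ambient \STS$(21)$ for part~(ii). Let $D := S' \cap S''$ and write $T'$, $T''$ for the missing triples of the two almost-sub-STSs $\bB'$, $\bB''$. For part~(i), Lemma~\ref{l:2sub} already restricts $d := |D|$ to the values $0, 1, 2, 3, 7$ (since $|S'|=|S''|=9$ and the supports are distinct, $d \le 8$); only $d=7$ needs to be eliminated. In the \STS$(9)$ completion $\bB'\cup\{T'\}$, the number of blocks contained in any $7$-subset of the $9$-point support equals $12-(4+4-1)=5$ by a standard replication count. Hence at most $5$ blocks of $\bB'$ lie in $D$, with equality exactly when $T' \not\subseteq D$, so $|\dD|\le 5$. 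On the other hand, Lemma~\ref{l:2sub} requires $\dD$ to be completable to an \STS$(7)$ (which has seven blocks) by adding at most $2$ triples, giving $|\dD|\ge 5$. Equality forces $T' \not\subseteq D$ (and symmetrically $T'' \not\subseteq D$), yet the $2$-triple completion is only possible via case~(iv) of the lemma's proof, which requires $T', T'' \subseteq D$. Contradiction.

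For the first assertion of part~(ii), I would establish $d = 3$ by counting the $(9-d)^2$ ``across pairs'' $\{x,y\}$ with $x \in S' \setminus S''$ and $y \in S'' \setminus S'$, classifying their unique covering blocks by the location of the third point $z$. Blocks with $z$ outside $S' \cup S''$ contribute at most $(3+d)(9-d)$ pairs, since each of the $3+d$ outside points lies in at most $9-d$ such blocks. Blocks with $z \in S' \setminus S''$, $z \in S'' \setminus S'$, or $z \in D$ are rare: any such block has two points in $S'$ (respectively $S''$), and uniqueness of pair covering in $\bB$ (together with the fact that every pair in $S'$ outside $T'$ is already covered by a block of $\bB'$) forces that pair to lie in $T'$ (respectively $T''$). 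This bounds the ``exceptional'' contributions by a small constant involving $|T' \cap (S' \setminus S'')|$ and $|T'' \cap (S'' \setminus S')|$. For $d \in \{0,1\}$ the gap between $(9-d)^2$ and $(3+d)(9-d)$ is already too large to close; for $d=2$ one additionally invokes case~(ii) of Lemma~\ref{l:2sub}, which forces $D \subseteq T'$ or $D \subseteq T''$ and thereby collapses the ``$z \in S' \setminus S''$'' (or symmetric) exception. Combined with part~(i), this pins $d = 3$.

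The second assertion of part~(ii) follows from the case analysis in the proof of Lemma~\ref{l:2sub} applied with $|D|=3$: cases (ii), (iv), (v) are excluded on size grounds, case~(i) gives $D \in \bB' \cap \bB''$ (so $D$ is a block of both $\bB'$ and $\bB''$), and case~(iii) gives $T'=D$ or $T''=D$ (possibly both), i.e., $D$ is the missing triple of one or both of the almost-sub-STSs. The main obstacle is the counting for $d=2$, where the naive outside-count alone leaves a small surplus and one must appeal to Lemma~\ref{l:2sub}'s case~(ii) characterization to trim the residual exceptions.
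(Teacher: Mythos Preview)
Your argument is correct. For part~(i), both you and the paper reduce via Lemma~\ref{l:2sub} to ruling out $d=7$; the paper dispatches this by asserting that an \STS(7) and an \STS(9) cannot share five blocks (left as a straightforward check), whereas you avoid that verification by pinning $|\dD|=5$ between the replication count and the $(\ge 7-2)$-block requirement and then reading off a contradiction directly from case~(iv) of Lemma~\ref{l:2sub}. For the first assertion of part~(ii), the approaches genuinely diverge: the paper argues pointwise---for $d=2$ it fixes a suitable $b\in S'\setminus S''$, produces six blocks $\{b,a_i,c_i\}$ whose third points must all land among the five points outside $S'\cup S''$, and then remarks that $d=0,1$ are similar---while you run a uniform double count of the $(9-d)^2$ ``across'' pairs against the $(3+d)(9-d)$ outside capacity plus tightly bounded exceptional terms governed by $T'\setminus D$ and $T''\setminus D$, invoking the $|D|=2$ case of Lemma~\ref{l:2sub} to kill one family of exceptions. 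Your route is more bookkeeping but handles $d\in\{0,1,2\}$ in one stroke and makes the role of the missing triples explicit; the paper's route is quicker for $d=2$ but leaves $d=0,1$ to the reader. For the second assertion of~(ii), the paper's proof is silent (it is implicit in Lemma~\ref{l:2sub}), while you spell out the case split (i)/(iii) at $|D|=3$, which is a welcome addition.
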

\begin{proof}
 To prove (i), by Lemma~\ref{l:2sub}, it remains to verify that the supports
 cannot intersect in $7$ points. Indeed, if such situation happens, then
 by Lemma~\ref{l:2sub}, there are \STS$(7)$ and \STS$(9)$ that have at least $5$ blocks in common.
 It is straightforward to check that this is not possible.

 (ii) It remains to prove that the supports, say $S'$ and $S''$, of the subsystems, say $\bB'$ and $\bB''$, cannot intersect in $2$, $1$, or $0$ points.
 Assume first that $|S' \cap S''|=2$.
 The block including $S' \cap S''$ does not belong to at least one
 of $\bB'$ and $\bB''$.
 W.l.o.g., assume that it is not in $\bB''$; so, the pair $S' \cap S''$ lies in the missing triple
 of the almost-sub-STS $\bB''$.
 Hence, $6$ points $a_1$, \ldots, $a_6$ from $S'' \backslash S'$
 do not belong to the missing triple. Take also one point $b$ from $S' \backslash S''$ that
 do not belong to the missing triple of $\bB'$ and consider the block containing $b$ and $a_i$, $i\in \{1,2,3,4,5,6\}$.
 This block is not from $\bB'$ or $\bB''$; hence, it intersects with each of $S'$ and $S''$
 in only one point. So, the third point $c_i$ of this block does not belong to $S'\cup S''$.
 But there are only $5$ points not in $S'\cup S''$, which immediately leads to a contradiction
 with the definition of STS.
 Similar contradictions can be found in the cases $|S' \cap S''|=1$ and $|S' \cap S''|=0$.
\end{proof}

Next, we focus on the order $21$. Assume we are given an \STS$(21)$ $(S,\bB)$.
A partition of $S$ into four sets $A$, $B$, $C$, $D$ of size $6$, $6$, $6$, and
and $3$ respectively is called a \emph{flower} with   \emph{stem} $D$ and   \emph{petals} $A$, $B$, $C$
if $\bB$ has a sub-\STS$(9)$ and two almost-sub-\STS$(9)$ with supports $A\cup D$, $B\cup D$, $C\cup D$,
where the missing triple of each of these almost-sub-STS is $D$
(whenever it belongs to $\bB$ or not). 
From Lemma~\ref{l:3w+3}, 
we can easily deduce the following.
\begin{lemma}\label{l:TD-flow}
An \STS$(21)$ has a flower $\{A$, $B$, $C$, $D\}$ with the stem $D$ if and only if it has a sub-\TD$(3,6)$
with groups $A$, $B$, $C$.
\end{lemma}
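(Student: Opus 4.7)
The plan is to prove the two implications separately. For the ``if'' direction, suppose the \STS$(21)$ $(S,\bB)$ has a sub-\TD$(3,6)$ with groups $A, B, C$. Setting $D := S\setminus(A\cup B\cup C)$, so $|D|=3$, and applying Lemma~\ref{l:3w+3} with $w=6$ immediately yields the flower structure, since the lemma hands us one sub-\STS$(9)$ and two almost-sub-\STS$(9)$'s on supports $A\cup D$, $B\cup D$, $C\cup D$, each almost-sub-STS having missing triple $D$.

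For the ``only if'' direction, I would start from the flower and set $\tT := \bB \setminus (\bB_A\cup \bB_B\cup \bB_C)$, where $\bB_A,\bB_B,\bB_C$ are the three subsystems on $A\cup D$, $B\cup D$, $C\cup D$. First I would check that these three sets of blocks are pairwise disjoint: a common block would have to be a subset of $D$ and so equal $D$, but an almost-sub-STS excludes its missing triple. With $|\bB_A|+|\bB_B|+|\bB_C|=12+11+11=34$ and $|\bB|=70$, this gives $|\tT|=36$, exactly the size of a \TD$(3,6)$ on three groups of size $6$.

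The substantive step is to verify that every block $T\in\tT$ meets each of $A,B,C$ in exactly one point. The key observation is that $\bB_A$ covers every pair inside $A$ as well as every pair $\{d,a\}$ with $d\in D$ and $a\in A$: the sub-\STS$(9)$ or almost-sub-\STS$(9)$ on $A\cup D$ covers all $\binom{9}{2}$ pairs in $A\cup D$ except at worst the three pairs inside $D$, and a direct block count through each $d\in D$ shows that the six pairs $\{d,a\}$ survive even after the triple $D$ is removed. By symmetry the same holds with $B$ or $C$ in place of $A$. Since each pair of $\bB$ lies in a unique block, no $T\in\tT$ can contain two points of the same group or a $D$-point together with a point of $A\cup B\cup C$; combined with $|T|=3$ this forces exactly one point in each of $A,B,C$. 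A final pair count ($108$ cross-group pairs in $A\cup B\cup C$ versus $36\cdot 3=108$ coverings by $\tT$) then confirms that $(A\cup B\cup C,\{A,B,C\},\tT)$ is a \TD$(3,6)$. The only delicate step is the $D$-$A$ pair-covering claim, which must be argued uniformly across the sub-STS and almost-sub-STS cases; the rest is routine bookkeeping.
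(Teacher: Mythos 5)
Your proof is correct, and it supplies exactly the details that the paper leaves implicit (the paper gives no proof of this lemma, stating only that it can be ``easily deduced'' from Lemma~\ref{l:3w+3}). Your ``if'' direction is precisely that deduction: apply Lemma~\ref{l:3w+3} to the sub-\TD\ and read off the flower. Your ``only if'' direction --- defining $\tT$ as the complement of the three subsystems, checking pairwise disjointness, counting $70-34=36$ blocks, and then using the pair-coverage of the sub-\STS$(9)$ and the two almost-sub-\STS$(9)$'s to force each remaining block to be a transversal of $\{A,B,C\}$ --- is the natural argument and is sound; the final $108=36\cdot 3$ count correctly upgrades ``at most one block per cross-group pair'' to ``exactly one.''

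One small point worth making explicit: when you conclude that a block $T\in\tT$ must have exactly one point in each of $A$, $B$, $C$, you should also rule out $T\subseteq D$ (i.e.\ $T=D$) and, more generally, $T$ containing two points of $D$. This follows from your own observation in the preceding sentence: the full sub-\STS$(9)$ of the flower covers \emph{all} pairs of its support, including the three pairs inside $D$, so any block of $\bB$ meeting $D$ in two points already lies in that subsystem and hence not in $\tT$. With that one line added, the argument is complete.
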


If there is only one flower $\{A$, $B$, $C$, $D\}$ (and only one sub \TD$(3,6)$), then we have two subcases, depending on whether
the stem $D$ is a block or not.
In the first subcase, the \STS$(21)$ has three sub-\STS$(9)$ with supports $A\cup D$, $B\cup D$, $C\cup D$.
In the second subcase, the \STS$(21)$ has only one sub-\STS$(9)$.
Our next goal is to characterize the situation when \STS$(21)$ has more than one sub-\TD$(3,6)$.

\begin{lemma}\label{l:2flow}
Assume that an \STS$(21)$ has two different flowers  $\{A,B,C,D\}$ and $\{A',B',C',D'\}$ with stems $D$, $D'$. Then
\begin{itemize}
 \item[\rm (i)] $D$ and $D'$ are disjoint;
 \item[\rm (ii)] $D\cup E=D' \cup E'$, for some $E\in\{A,B,C\}$ and $E'\in\{A',B',C'\}$;
\item[\rm (iii)] the \STS$(21)$ has a sub-\STS$(9)$ with support $D\cup E$, where $E$ is from p.(ii).
\end{itemize}
\end{lemma}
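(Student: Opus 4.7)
My plan is to exploit Corollary~\ref{c:2sub}(ii), which forces any two distinct $9$-point supports of sub-STS$(9)$'s or almost-sub-STS$(9)$'s inside an STS$(21)$ to meet in exactly $3$ points (a sub-STS$(9)$ may be treated by removing any one of its blocks, which leaves its support unchanged). Each flower contributes three such $9$-supports: write $S_X := X\cup D$ for $X\in\{A,B,C\}$ and $S'_Y := Y\cup D'$ for $Y\in\{A',B',C'\}$. Inside one flower the three supports pairwise intersect exactly in the stem, so inclusion--exclusion applied to $S_A$ against $S'_{A'},S'_{B'},S'_{C'}$ yields
\[
9 = |S_A\cap S'_{A'}| + |S_A\cap S'_{B'}| + |S_A\cap S'_{C'}| - 2|S_A\cap D'|.
\]

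For (ii), I would suppose no support $S_X$ coincides with any support $S'_Y$, so every cross-intersection above is $3$; then $|S_A\cap D'|=0$, and the analogous identity for $X\in\{B,C\}$ gives $D'\cap S = \emptyset$, contradicting $|D'|=3$. Hence some $S_X$ coincides with some $S'_Y$; after relabeling, $S_A=S'_{A'}$, i.e.\ $D\cup A=D'\cup A'$, which is (ii). For (i), I would run the same identity with $X=B$: the term $|S_B\cap S'_{A'}|$ equals $|S_B\cap S_A|=|D|=3$ since $S_A$ and $S_B$ share the stem of flower~$1$. If either $S_B=S'_{B'}$ or $S_B=S'_{C'}$, then combined with $S_A=S'_{A'}$ I can derive $C=C'$ by complementation and $D=D'$ by intersection (since in each flower two distinct supports meet exactly in the stem), forcing the two flowers to coincide as $4$-partitions --- contradicting the hypothesis. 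Therefore all cross-intersections of $S_B$ with the primed supports are $3$, giving $|S_B\cap D'|=0$; since $D'\subseteq S_A=A\cup D$ is disjoint from $B$, this reduces to $D\cap D'=\emptyset$.

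For (iii), let $\bB_0$ be the (well-defined) collection of blocks of $\bB$ contained in $D\cup A=D'\cup A'$. By Lemma~\ref{l:3w+3} applied to flower~$1$, $\bB_0$ is either a sub-STS$(9)$ or an almost-sub-STS$(9)$ with missing triple $D$; applied to flower~$2$, either a sub-STS$(9)$ or an almost-sub-STS$(9)$ missing $D'$. The missing triple of an almost-sub-STS$(9)$ is unique, so if $\bB_0$ were almost then $D=D'$, contradicting~(i). Hence $\bB_0$ is a sub-STS$(9)$, proving (iii). The main nuisance I expect is the bookkeeping inside (i) --- checking carefully that a second coincidence among the six supports really does force both the unordered partitions and the stems to agree, and confirming that the sub-STS$(9)$ analog of Corollary~\ref{c:2sub}(ii) is legitimate via the block-removal trick.
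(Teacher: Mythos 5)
Your proof is correct and follows essentially the same route as the paper's: both reduce everything to Corollary~\ref{c:2sub} applied to the six $9$-point supports (with the same block-removal trick to treat the full sub-\STS$(9)$ as an almost one), your exact inclusion--exclusion count merely replacing the paper's averaging argument for (ii), and (i) and (iii) being handled the same way. No gaps.
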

\begin{proof}
Consider a point $d$ from $D$ and assume without loss of generality that it lies in $D'\cup A'$.
Every point of $D'\cup A'$ lies in one of $D\cup A$, $D\cup B$, $D\cup C$ and at least one point lies
in all. Hence,   $D'\cup A'$ intersects in more than $3$ points with these three sets in average.
By Corollary~\ref{c:2sub}, it coincides with one of them, say $D\cup A$; so, (ii) is proved.
If $D$ and $D'$ intersect, then the same can be said about $D'\cup B'$ and $D'\cup C'$, and the flowers coincide. So, (i) holds.
The last claim is also easy as the union of two different almost-sub-STS with the same support $D\cup A$ is necessarily a sub-STS.
\end{proof}

\begin{lemma}\label{l:3flow}
Assume that an \STS$(21)$ $\bB$ has two different flowers
$\{A,B,C,D\}$ and $\{A',B',C',D'\}$ with stems $D$, $D'$.
Assume without loss of generality that $D\cup A=D' \cup A'$.
Denote
% \[
% \begin{split}
% &A_{001}:=D, A_{011}:=A\backslash D', A_{010}:=D', A_{101}:=B\cap C',\\
% &A_{111}:=C\cap C', A_{110}:=C\cap B', A_{100}:=B\cap B'.
% \end{split}
% \]
  $$A_{001}:=D, 
  \qquad\qquad\qquad 
  A_{011}:=A\backslash D', 
  \qquad \qquad\qquad 
  A_{010}:=D', $$
 $$A_{101}:=B\cap C', 
 \qquad
  \raisebox{ 2mm} {$A_{111}:=C\cap C',$} 
  \qquad
  A_{110}:=C\cap B',$$
   $$A_{100}:=B\cap B'.$$
The following assertions hold.
\begin{itemize}
 \item[\rm (i)]
If both $D$ and $D'$ are blocks of $\bB$,
then $\bB$ includes exactly $7$ sub-\TD$(3,6)$
with flowers
\begin{eqnarray}
\{A_{001},\ A_{010}\cap A_{011},\ A_{100}\cap A_{101},\ A_{110}\cap A_{111}\},\label{eq:A0} \\
\{A_{010},\ A_{001}\cap A_{011},\ A_{100}\cap A_{110},\ A_{101}\cap A_{111}\},\label{eq:A1} \\
\{A_{011},\ A_{001}\cap A_{010},\ A_{100}\cap A_{111},\ A_{101}\cap A_{110}\},\label{eq:A2} \\
\{A_{100},\ A_{001}\cap A_{101},\ A_{010}\cap A_{110},\ A_{011}\cap A_{111}\},\label{eq:A3} \\
\{A_{101},\ A_{001}\cap A_{100},\ A_{010}\cap A_{111},\ A_{011}\cap A_{110}\},\label{eq:A4} \\
\{A_{110},\ A_{001}\cap A_{111},\ A_{010}\cap A_{100},\ A_{011}\cap A_{101}\},\label{eq:A5} \\
\{A_{111},\ A_{001}\cap A_{110},\ A_{010}\cap A_{101},\ A_{011}\cap A_{100}\},\label{eq:A6}
\end{eqnarray}

and exactly $7$ sub-\STS$(9)$.
 \item[\rm (ii)]
If at most one of $D$, $D'$ is a block of $\bB$,
then $\bB$ includes exactly $3$ sub-\TD$(3,6)$
with flowers \eqref{eq:A0}--\eqref{eq:A2}.
In this case, if $D$ or $D'$ is a block,
then $\bB$ has exactly $3$ sub-\STS$(9)$;
otherwise exactly $1$.
\end{itemize}
\end{lemma}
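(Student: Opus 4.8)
The plan is to coordinatise $\bB$ by the two flowers and reduce everything to the internal structure of the common sub-\STS$(9)$. First I would label the seven $3$-element classes $A_{001},\dots,A_{111}$ of the partition by the nonzero binary vectors of length $3$ (as in the statement), and record the two transversal designs $\mathcal T$ on $A,B,C$ and $\mathcal T'$ on $A',B',C'$ produced by Lemma~\ref{l:TD-flow}, together with the splittings $\bB=\mathcal T\cup\bB_A\cup\bB_B\cup\bB_C$ and $\bB=\mathcal T'\cup\bB_{A'}\cup\bB_{B'}\cup\bB_{C'}$ given by Lemma~\ref{l:3w+3}. Writing $\ell_0:=A_{001}\cup A_{010}\cup A_{011}=D\cup A=D'\cup A'$ for the common $9$-set, which carries a sub-\STS$(9)$ $\pP$ by Lemma~\ref{l:2flow}(iii), I note that the seven \emph{lines} $\{u,v,w\}$ with $u+v+w=0$ (binary addition) are exactly the index sets of the candidate supports $A_u\cup A_v\cup A_w$ of the seven flowers \eqref{eq:A0}--\eqref{eq:A6} and of the claimed sub-\STS$(9)$'s.

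The technical core is a linearity principle: if $x\in A_u$ and $y\in A_v$ with $u\neq v$ and $\{u,v\}$ not contained in $\{001,010,011\}$, then the third point of the block through $x,y$ lies in $A_{u+v}$. I would prove this one pair at a time by intersecting two constraints---that the block belongs to $\mathcal T$ or to one of $\bB_A,\bB_B,\bB_C$, and that it belongs to $\mathcal T'$ or to one of $\bB_{A'},\bB_{B'},\bB_{C'}$---which in each admissible case confines the third point to $A_{u+v}$; the only pairs that escape both designs are the three index pairs inside $\ell_0$, since $A_{001}\notin\mathcal T$ and $A_{010}\notin\mathcal T'$. Counting the ten blocks through a point of $A_{100},A_{101},A_{110}$ or $A_{111}$ then forces each of these four classes to be a block of $\bB$, in both cases: all eighteen non-internal pairs at such a point are governed by the principle and are absorbed by nine ``line'' blocks, leaving a single block for the two internal pairs. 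The same count shows that $A_{001},A_{010},A_{011}$ are blocks of $\bB$ exactly when they are lines of $\pP$, and a partition of an \STS$(9)$ into three triples contains $0$, $1$ or $3$ lines; this is the source of the dichotomy.

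In case (i) the hypothesis makes $D=A_{001}$ and $D'=A_{010}$ two disjoint lines of $\pP$, so the third line $A_{011}$ of their parallel class is a line as well; hence all seven classes are blocks, the linearity principle holds for every pair, and each of the seven line-supports carries a full sub-\STS$(9)$ (its three classes forming a parallel class, completed by a latin square of cross blocks). Each nonzero index $v$ then produces the flower with stem $A_v$ and petals read off the three lines through $v$, i.e.\ precisely \eqref{eq:A0}--\eqref{eq:A6}; by Lemma~\ref{l:TD-flow} these give exactly seven sub-\TD$(3,6)$, and the seven line-supports are the seven sub-\STS$(9)$'s. In case (ii) at most one of $A_{001},A_{010}$ is a line of $\pP$, so $\ell_0$ is the only support forced to be full; one checks directly that the three lines through any $v\in\{100,\dots,111\}$ include a line meeting a non-line class of $\ell_0$ in the wrong way, so no flower has such a stem, while the flowers \eqref{eq:A0}--\eqref{eq:A2} through $\{001,010,011\}$ do occur. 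The number of sub-\STS$(9)$ is then read off from how many of $A_{001},A_{010},A_{011}$ are lines of $\pP$ (three, one, or none giving $7$, $3$, or $1$), which yields the stated counts.

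The remaining work is \emph{exactness}. Here I would use the rigidity supplied by the principle: outside $\ell_0$ every block is either an internal class $A_v$ or a transversal of some $A_u,A_v,A_{u+v}$, so away from $\ell_0$ the system is a latin square on each line, and $\bB$ has the predicted $70$ blocks. Consequently the only sub-\STS$(9)$ contained in $\ell_0$ is $\pP=\ell_0$ itself, while any other sub-\STS$(9)$ must meet the complement of $\ell_0$ and is then forced by the principle to have a line-union as support; and the stem of any flower, being the common part of three such $9$-supports, must be a single class $A_v$. This pins the flowers and sub-\TD$(3,6)$ down to the listed ones. I expect the main obstacle to be exactly this exhaustiveness step together with the sub-\STS$(9)$ bookkeeping in case (ii): one must rule out ``exotic'' subsystems whose supports are not unions of classes, which requires controlling the non-transversal blocks that can appear inside $\pP$ when $A_{001},A_{010},A_{011}$ are not all lines.
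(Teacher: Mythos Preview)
Your proposal follows essentially the same strategy as the paper, but organised through a unifying ``linearity principle'' where the paper argues piecemeal. The paper's claims $(*)$ and $(**)$ are exactly your principle specialised to the pairs $\{100,111\}$ and $\{101,110\}$: the elimination argument there (``the third point $c$ can only belong to $A_{011}$'') is precisely your ``intersecting two constraints'' from the two flower decompositions. Where you count the ten blocks through a point of $A_{100}$ to force $A_{100}$ to be a block, the paper instead invokes Corollary~\ref{c:2sub}(ii) directly: $A_{100}=(B\cup D)\cap(B'\cup D')$ is the $3$-point intersection of two almost-sub-\STS$(9)$ supports, hence a block (or a missing triple, which is excluded here). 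That is shorter, though your counting is perfectly valid.

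The one place where the paper's route is genuinely cleaner is the exhaustiveness step you flag as the main obstacle. Rather than arguing via the linearity principle that an arbitrary sub- or almost-sub-\STS$(9)$ support must be a line-union, the paper uses a one-line pigeonhole: any $9$-point support meets some two classes $A_u,A_v$ in at least four points total; those two classes lie in one of the seven line-unions, and Corollary~\ref{c:2sub}(ii) then forces the supports to coincide. Your sketch ``is then forced by the principle to have a line-union as support'' would need real extra work to handle a putative support meeting some $A_v$ in only one or two points, whereas the pigeonhole plus Corollary~\ref{c:2sub} bypasses this entirely. Your case-(ii) bookkeeping (``three, one, or none of $A_{001},A_{010},A_{011}$ being lines of $\pP$ giving $7$, $3$, or $1$'') is correct and in fact slightly sharper than the paper's phrasing in terms of $D,D'$ alone.
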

\begin{proof}
We first note that by Corollary~\ref{c:2sub}(ii), $A_{101}$, $A_{111}$, $A_{110}$, and $A_{100}$
are blocks of $\bB$.
Next, we state that

(*) \emph{there is an almost-sub-STS with the support
$A_{011} \cap A_{100} \cap A_{111}$ and the missing triple $A_{011}$.}
Indeed, consider a block containing a point $a$ from $A_{100}$ and a point $b$ from $A_{111}$.
The third point $c$ of this block can only belong to $A_{011}$ (for example, if $c\in A_{001}$,
then the pair $\{a,c\}$ is already covered by a block from the almost-STS on $A_{001} \cap A_{101} \cap A_{100}$;
the other cases lead to similar contradictions). So, the $9$ such blocks form
a \TD$(3,3)$; completing by the blocks $A_{100}$ and $A_{111}$, we get an almost-sub-STS$(9)$.
Similarly,

(**) \emph{there is an almost-sub-STS with the support
$A_{011} \cap A_{101} \cap A_{110}$ and the missing triple $A_{011}$.}

So, we have a collection from a sub-\STS$(9)$
and six almost-sub-\STS$(9)$ with different supports,
corresponding to the flowers \eqref{eq:A0}--\eqref{eq:A2}.
It is easy to find that

(***) \emph{there is no sub-\STS$(9)$ or almost-sub-\STS$(9)$ with any other support.}
Indeed,
if $B$ is the support of a sub-\STS$(9)$, then it intersects in at least four points in total
with some two sets $A_{...}$;
the union of these two sets is included in the support of some of the seven sub-\STS$(9)$,
and by Corollary~\ref{c:2sub} $B$ coincides with this support.

Now consider subcases.

If both $D$ and $D'$ are in $\bB$, then we also have $A_{011} \in \bB$, and all those six almost-sub-STS
are completed to a sub-\STS$(9)$, forming seven different flowers in total.
From Lemma~\ref{l:3flow} and Corollary~\ref{c:2sub},
we conclude that there are no more flowers.
By (***), there are only seven sub-\STS$(9)$.

If $D\not\in \bB$ or $D'\not\in\bB$,
then \eqref{eq:A3}--\eqref{eq:A6} are not flowers.
Arguments similar as above show that there are only
three flowers \eqref{eq:A0}--\eqref{eq:A2}.

If $D\in \bB$ and $D'\not\in\bB$,
then we also have $A_{011} \not \in \bB$
(in any \STS$(9)$, the complement of two disjoint blocks is necessarily a block too).
In this case, we have only three sub-\STS$(9)$ with supports
$A_{001} \cup A_{011} \cup A_{010}$,
$A_{001} \cup A_{101} \cup A_{100}$,
$A_{001} \cup A_{111} \cup A_{110}$.
The subcase $D\not\in \bB$ and $D' \in \bB$ is similar.

 If $D\not\in \bB$ and $D'\not\in\bB$,
 then the missing triple of any of the six almost-sub-STS is not in $\bB$,
 and  $\bB$ has only one sub-\STS$(9)$, with the support $A_{001} \cup A_{011} \cup A_{010}$.
\end{proof}

\begin{remark}
 One can observe that each of the seven supports of (almost)-sub-\STS$(9)$ considered in the 
 lemma above is the union of three of $A_{001}$, $A_{010}$, $A_{011}$, $A_{100}$, $A_{101}$, $A_{110}$, $A_{111}$. The corresponding seven triples form an \STS$(7)$ on the point set  
 $\{A_{001}$, $A_{010}$, $A_{011}$, $A_{100}$, $A_{101}$, $A_{110}$, $A_{111}\}$
(the STS$(7)$ is unique up to isomorphism and known as the \emph{Fano plane}).
\end{remark}

The next two well-known and straightforward facts will be utilized in our further discussion.
\begin{proposition}\label{p:td6}
If a \TD$(3,6)$ $(S,\{A,B,C\},\tT)$ has a sub-\TD$(3,3)$ with the groups
$A_0\subset A$, $B_0\subset B$, $C_0\subset C$, then $\tT$ has exactly three other sub-\TD$(3,3)$,
with groups $A_0$, $B_1$, $C_1$, with groups $A_1$, $B_0$, $C_1$, and  with groups $A_1$, $B_1$, $C_0$,
where $A_1:=A\backslash A_0$,  $B_1:=B\backslash B_0$,  $C_1:=C\backslash C_0$.
\end{proposition}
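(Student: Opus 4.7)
The plan is to view the TD$(3,6)$ in Latin-square form: for each $(a,b)\in A\times B$ let $c(a,b)\in C$ be the unique point with $\{a,b,c(a,b)\}\in\tT$, so that a sub-TD$(3,3)$ with groups $A',B',C'$ is exactly the condition $c(A'\times B')\subseteq C'$. Set $A_1:=A\setminus A_0$, $B_1:=B\setminus B_0$, $C_1:=C\setminus C_0$. First I would establish the ``quadrant'' structure: each of the four rectangles $c(A_i\times B_j)$ with $i,j\in\{0,1\}$ is a $3\times 3$ Latin subsquare whose values lie in $C_0$ when $i=j$ and in $C_1$ when $i\ne j$. For $a\in A_0$, the map $b\mapsto c(a,b)$ is a bijection $B\to C$; by hypothesis the $3$-element image $c(\{a\}\times B_0)$ lies in $C_0$, hence equals $C_0$, so $c(\{a\}\times B_1)=C_1$. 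The symmetric column argument, together with the analogous statement for $a\in A_1$, completes this step and already exhibits the three sub-TD$(3,3)$ claimed in the proposition.

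Next I would show uniqueness. Let $(A^*,B^*,C^*)$ be any sub-TD$(3,3)$ in $\tT$ and set $\beta:=|B^*\cap B_0|$. The crux is the claim $A^*\in\{A_0,A_1\}$. Suppose to the contrary that $A^*$ meets both halves, and pick $a\in A^*\cap A_0$ and $a'\in A^*\cap A_1$. Each of $c(\{a\}\times B^*)$ and $c(\{a'\}\times B^*)$ equals $C^*$ (the third points of the three sub-TD blocks through $a$ are precisely $C^*$, and likewise for $a'$). Decomposing these images by $B^*\cap B_0$ and $B^*\cap B_1$ and using the quadrant structure, the image from $a$ places $\beta$ of its values in $C_0$ and $3-\beta$ in $C_1$, whereas the image from $a'$ has these roles swapped. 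Hence $|C^*\cap C_0|$ equals both $\beta$ and $3-\beta$, forcing $\beta=3-\beta$, impossible for an integer. By the symmetric two-column argument, $B^*\in\{B_0,B_1\}$. With $A^*$ and $B^*$ each pinned to one half, the quadrant structure determines $C^*$, leaving exactly four sub-TD$(3,3)$: the original and the three listed.

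The main obstacle is the interleaving case just handled; a priori one must rule out a sub-TD$(3,3)$ that splits both $A^*$ and $B^*$ nontrivially across the two halves. The two-row comparison cleanly resolves this by yielding two incompatible values for the single parameter $|C^*\cap C_0|$, sidestepping any case analysis over intermediate sub-rectangle configurations.
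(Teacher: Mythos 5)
Your proof is correct. The paper states Proposition~\ref{p:td6} as a ``well-known and straightforward fact'' and supplies no proof, so there is no argument of the authors' to compare against; your write-up simply fills the gap. The Latin-square reformulation (a sub-\TD$(3,3)$ with groups $A',B',C'$ is exactly the condition $c(A'\times B')\subseteq C'$), the quadrant structure $c(A_i\times B_j)\subseteq C_{(i+j)\bmod 2}$ obtained from injectivity of rows and columns, and the two-row comparison forcing $|C^*\cap C_0|=\beta=3-\beta$ together give a complete verification of both existence of the three companion subdesigns and the fact that there are no others. The only step left tacit is that the three groups of an arbitrary sub-\TD$(3,3)$ of $\tT$ must be the intersections of its support $S^*$ with $A$, $B$, $C$: counting the nine blocks, each meeting each of $A,B,C$ once, with every point of the subdesign on three of them, gives $|S^*\cap A|=|S^*\cap B|=|S^*\cap C|=3$, and since two points in the same group of $\tT$ never share a block, each such intersection lies in (hence equals) a group of the subdesign. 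This is harmless here because the proposition is already phrased for groups contained in $A$, $B$, $C$, but it is worth a sentence if you want the uniqueness claim to be airtight.
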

\begin{proposition}\label{p:sts9}
If $D$ is a block of an \STS$(9)$ $(S,\bB)$,
then $\bB$ has exactly two blocks disjoint with $D$.
Moreover, these two blocks are disjoint with each other,
and the remaining $9$ blocks form a sub-\TD$(3,3)$.
\end{proposition}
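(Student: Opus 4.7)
The plan is to proceed by careful counting inside the STS$(9)$, which has $12$ blocks and each point lying in exactly $4$ blocks. First I would count the blocks meeting $D$: since any pair of points lies in a unique block, no block other than $D$ can contain two points of $D$, so each of the three points of $D$ lies in exactly $3$ additional blocks, each meeting $D$ in a single point. These $3\cdot 3=9$ blocks are pairwise distinct (else some block would contain two points of $D$), and together with $D$ they account for $10$ blocks, leaving exactly $2$ blocks disjoint from $D$, which gives the first claim.

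Call these two blocks $E$ and $F$. To show $E\cap F=\emptyset$, I would pass to a graph on the $6$-point set $S\setminus D$. For each $x\in D$, the three blocks through $x$ other than $D$ partition $S\setminus D$ into pairs and thus determine a perfect matching $M_x$. The matchings $M_x$, $x\in D$, are pairwise edge-disjoint, because a common edge would yield two distinct blocks covering the same pair of points. So their union $G$ is a $3$-regular graph on $6$ vertices with $9$ edges, and its complement $\overline{G}$ in $K_6$ is $2$-regular with $6$ edges, hence a disjoint union of simple cycles of total length $6$: either one $6$-cycle or two vertex-disjoint $3$-cycles. Now $E$ and $F$ must cover exactly the pairs in $S\setminus D$ not already covered by blocks meeting $D$, i.e.\ precisely the edges of $\overline{G}$, and each of them contributes a triangle. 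Since a $6$-cycle contains no triangle, $\overline{G}$ must consist of two vertex-disjoint $3$-cycles, which are then $E$ and $F$; in particular $E\cap F=\emptyset$.

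Finally, I would verify that the remaining $9$ blocks form a sub-\TD$(3,3)$ with groups $D$, $E$, $F$. These three sets partition $S$ into groups of size $3$. Any of the $9$ blocks contains exactly one point of $D$; if it contained two points of $E$, that pair would also lie in the block $E$, contradicting uniqueness of covering, and similarly for $F$. Hence each of the $9$ blocks is a transversal of $\{D,E,F\}$. For any cross-group pair, e.g.\ $\{x,a\}$ with $x\in D$ and $a\in E$, the unique block of $\bB$ containing it is none of $D$, $E$, $F$, so it is one of the $9$ transversal blocks, and we conclude that every cross-group pair is covered exactly once, yielding a \TD$(3,3)$.

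The main obstacle is the middle step: the straight block count leaves open the possibility $|E\cap F|=1$, and ruling it out seems to require the detour through $\overline{G}$ (or an equivalent explicit case analysis), because only after excluding the $6$-cycle alternative does one see that $E$ and $F$ must be vertex-disjoint triangles; the first and third steps are then just bookkeeping.
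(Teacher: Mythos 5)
Your proof is correct. Note that the paper itself offers no proof of this proposition --- it is stated as one of two ``well-known and straightforward facts'' --- so there is no argument in the text to compare against; your write-up simply supplies the omitted details, and every step checks out. That said, your middle step is more elaborate than necessary. You do not need the matching/complement-graph detour to see that $E$ and $F$ are disjoint: each point $p\in S\setminus D$ lies in exactly $4$ blocks, and the three pairs $\{p,x\}$ with $x\in D$ lie in three \emph{distinct} blocks (two of them could coincide only if some block contained two points of $D$), so $p$ lies in exactly $3$ blocks meeting $D$ and hence in exactly $1$ block disjoint from $D$. Thus each of the $6$ points of $S\setminus D$ belongs to exactly one of $E$, $F$, and since $|E|+|F|=6$ these two blocks partition $S\setminus D$. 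This replaces your second paragraph entirely; your first and third paragraphs are exactly the right bookkeeping and can stand as written. Your graph-theoretic argument does have the side benefit of exhibiting the structure of the three matchings $M_x$, but for the statement at hand it is an avoidable complication rather than a required idea.
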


\begin{lemma}\label{l:2td}
 Assume that an \STS$(21)$ $(S,\bB)$
 has a flower $\{A,B,C,D\}$, and $\tT$ is a transversal subdesign of $\bB$
 on the petals $A$, $B$, $C$, as the groups. Let $D'$ be a $3$-subset of $A$.
 The system $\bB$ has a second sub-\TD$(3,6)$ $\tT'$ with the support $S\backslash D'$
 if and only if it has
 disjoint blocks $B_0, B_1 \subset B$ and
 disjoint blocks $C_0, C_1 \subset C$ such that $\tT$
 is partitioned into four sub-\TD$(3,3)$ with groups from
 $D'$, $A\backslash D'$, $B_0$, $B_1$, $C_0$, $C_1$.
\end{lemma}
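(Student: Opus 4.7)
The plan is to prove both directions by making explicit the correspondence between $\tT'$ and a sub-\TD$(3,3)$-partition of $\tT$, via the two flowers associated to $\tT$ and $\tT'$.

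\emph{($\Rightarrow$).} Given $\tT'$, I would first apply Lemma~\ref{l:TD-flow} to obtain a second flower $\{E',B',C',D'\}$ with stem $D'$. Since $D'\subset A$, Lemma~\ref{l:2flow}(ii) forces one new petal, say $E'$, to equal $D\cup(A\setminus D')$, so that $B'$ and $C'$ partition $B\cup C$. Define $B_0:=B\cap B'$, $B_1:=B\cap C'$, $C_0:=C\cap B'$, $C_1:=C\cap C'$. The key step is to show that $B_0$ and $B_1$ are blocks of $\bB$. Any block of $\bB$ contained in $B$ cannot lie in $\tT'$ (no $E'$-point) nor in the component of the new flower's decomposition with support $E'\cup D'$ (disjoint supports), so it must be a subset of $B_0$ or $B_1$. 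Applying Proposition~\ref{p:sts9} to the \STS$(9)$ $\bB_B\cup\{D\}$ supplies exactly two such blocks, which partition $B$, and these must be $B_0,B_1$ themselves. The argument for $C_0,C_1$ is symmetric. Finally, a block of $\tT$ belongs to $\tT'$ iff its $A$-point is in $A\setminus D'$ and its $B$- and $C$-points sit in opposite groups of $\{B',C'\}$; the resulting $18$ blocks form two sub-\TD$(3,3)$'s of $\tT$ with $A$-group $A\setminus D'$, and Proposition~\ref{p:td6} supplies the two complementary sub-\TD$(3,3)$'s with $A$-group $D'$ to complete the partition.

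\emph{($\Leftarrow$).} Conversely, Proposition~\ref{p:sts9} applied to $\bB_B\cup\{D\}$ (whose two blocks disjoint from $D$ are $B_0$ and $B_1$ by hypothesis) gives a sub-\TD$(3,3)$ with groups $D,B_0,B_1$ sitting inside $\bB_B$; likewise a sub-\TD$(3,3)$ with groups $D,C_0,C_1$ sits inside $\bB_C$. I would then define $\tT'$ as the union of these $18$ blocks with the two sub-\TD$(3,3)$'s of $\tT$ whose $A$-group is $A\setminus D'$, giving a $36$-block subset of $\bB$. A routine pair-by-pair case check confirms $\tT'$ is a \TD$(3,6)$ on $S\setminus D'$, with groups $D\cup(A\setminus D')$ and two further unions of the form $B_i\cup C_j$ dictated by the given partition.

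\emph{The main obstacle} lies in the forward direction: I must rule out the degenerate cases $B_0=B$ or $B_0=\emptyset$, which would force $B'=B$ and $C'=C$. In that situation $\tT'$ would need to contain a block of $\bB$ joining some point of $D$, some point of $B$, and some point of $C$, but such a block is incompatible with the original flower's decomposition $\bB=\tT\cup\bB_A\cup\bB_B\cup\bB_C$. Aside from this, both directions reduce to careful bookkeeping with Propositions~\ref{p:sts9} and~\ref{p:td6}.
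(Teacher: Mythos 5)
Your argument is correct, but it routes the two key steps differently from the paper, so it is worth comparing. In the forward direction the paper defines $B_0=B\cap B'$, $B_1=B\cap C'$, $C_0=C\cap B'$, $C_1=C\cap C'$ exactly as you do, but then certifies that all four are blocks in one stroke via Corollary~\ref{c:2sub}(ii): $B\cup D$ and $B'\cup D'$ are supports of distinct (almost-)sub-\STS$(9)$, so their intersection $B_0$ has exactly three points and, being neither missing triple $D$ nor $D'$, is a block of both. That bypasses entirely the size/degeneracy analysis your Proposition~\ref{p:sts9}-based route must carry: you correctly reduce everything to the case $\{B_0,B_1\}=\{B,\emptyset\}$ (the intermediate sizes force both blocks of Proposition~\ref{p:sts9} into one part and hence collapse to this case), and your refutation of it --- a block of $\tT'$ would have to meet $D$, $B$ and $C$, contradicting the decomposition of Lemma~\ref{l:3w+3} --- is valid, just more work than the paper needs. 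The paper then gets its first two sub-\TD$(3,3)$'s by deleting $B_0,C_0$ (resp.\ $B_1,C_1$) from the almost-sub-\STS\ on $D'\cup B'$ (resp.\ $D'\cup C'$), whereas you identify the $18$ blocks of $\tT\cap\tT'$ directly; both finish with Proposition~\ref{p:td6}. In the reverse direction the paper assembles a flower with stem $D'$ and invokes Lemma~\ref{l:TD-flow}, while you build $\tT'$ explicitly as a union of four \TD$(3,3)$'s with groups $D\cup(A\setminus D')$, $B_0\cup C_0$, $B_1\cup C_1$ (up to relabelling) and check the pair-covering count directly; your version is more self-contained, the paper's reuses its machinery. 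I see no gaps in your proposal.
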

\begin{proof}
 Assume that there is such subdesign $\tT'$.
 In this case, there is a flower $\{D',A',B',C'\}$, where $D'\cup A'=D\cup A$.
 Denote $B_0:=B\cap B'$, $B_1:=B\cap C'$, $C_0:=C\cap B'$, $C_1:=C\cap C'$.
 By Corollary~\ref{c:2sub},  $B_0$, $B_1$, $C_0$, $C_1$ are blocks of $\bB$.
 Since $\bB$ has a sub-STS with the support $D'\cup A'$,
 by the definition of a flower it has two almost-sub-STS with the supports $D'\cup B'$
 and  $D'\cup C'$. Removing the blocks $B_0$, $B_1$, $C_0$, $C_1$
 from these almost-sub-STS's, we obtain two \TD$(3,3)$.
 The remaining two sub-\TD$(3,3)$ of
 $\tT$ are guaranteed by Proposition~\ref{p:td6}.

 The ``if'' part of the statement is also straightforward, taking into account
 Lemma~\ref{l:TD-flow}. If $B$ is partitioned into two blocks $B_0$, $B_1$,
 then, by the definition of a flower and Proposition~\ref{p:sts9}, we see that
 there is an almost-sub-STS with the support $B\cup D$ and the missing triple $D$
 (which can be a block or not a block of $\bB$). The same can be said about the support $C\cup D$.
 Then the definition of a flower implies that $\bB$ has a complete sub-STS with the support $A\cup D$.
 It remains to find two more petals, to form a flower with the stem $D'$.
 By the hypothesis, we have a sub-TD with groups $D'$, $B_0$ and $C_0$, for some block $C_0\subset C$.
 Completing it by the blocks $B_0$ and $C_0$, we get an almost-sub-STS.
 Similarly, we find an almost-sub-STS with the support $D'\cup B_1\cup C_1$, $C_1:=C\backslash C_0$.
 So, $\{D', D \cup A\backslash D', B_0\cup C_0, B_1\cup C_1\}$ is a flower, and by Lemma~\ref{l:TD-flow}
 there is a required sub-TD.
\end{proof}

%=================================================
%=================================================
%=================================================

\section{Classification of \STS$(21)$ with sub-\TD$(3,6)$.}\label{s:calc}

Now, based on the lemmas above, we are ready to present the way of the computer-aided classification and its results.
We start with describing how to count the number of isomorphism classes
of \STS$(21)$ with a unique sub-\TD$(3,6)$.

We first fix a flower $\{A_{001},\ A_{010}\cap A_{011},\ A_{100}\cap A_{101},\ A_{110}\cap A_{111}\}$,
where all sets $A_{...}$ are of size $3$.
Let $\mathbf A$ be the set of all $840$ \STS$(9)$ on $A_{001} \cap A_{010} \cap A_{011}$.
Denote by $\mathbf A'$ the subset of $\mathbf A$ that consists of
$120$ \STS$(9)$ with block $A_{001}$; by deleting this block in all these STS
we obtain the set $\mathbf A^*$ of $120$ almost-STS$(9)$ with missing $A_{001}$.
Denote by $\mathbf A''$ the subset of $\mathbf A'$ that consists of
$12$ \STS$(9)$ with blocks $A_{001}$, $A_{010}$, $A_{011}$.
Similarly, we define the collections $\mathbf B$, $\mathbf B'$, $\mathbf B''$, $\mathbf B^*$
of triple systems on $A_{001} \cap A_{100} \cap A_{101}$ and
the collections $\mathbf C$, $\mathbf C'$, $\mathbf C''$, $\mathbf C^*$
of triple systems on $A_{001} \cap A_{110} \cap A_{111}$.

Next, we choose a representative $\tT$ of one of $12$ (see \cite{A003090}) isomorphism classes
of \TD$(3,6)$ with groups $A_{010}\cap A_{011}$, $A_{100}\cap A_{101}$, $A_{110}\cap A_{111}$.
Moreover, we require that if the representative is divided into sub-\TD$(3,3)$'s, then
these sub-\TD's have the group sets
$\{A_{010}, A_{100}, A_{110}\}$,
$\{A_{010}, A_{101}, A_{111}\}$,
$\{A_{011}, A_{100}, A_{111}\}$,
$\{A_{011}, A_{101}, A_{110}\}$ (see Proposition~\ref{p:td6}).

Now, by Lemma~\ref{l:TD-flow}, every \STS$(21)$ with sub-TD $\tT$ is divided into
$\tT $, $ \aA$, $\bB$, and $\cC$,
where
\begin{itemize}
\item either
$\aA \in \mathbf A'$,\qquad $\bB \in \mathbf B^*$,\qquad $\cC \in \mathbf C^*$,
\item or
$\aA \in \mathbf A\backslash \mathbf A'$,\qquad $\bB \in \mathbf B^*$,\qquad $\cC \in \mathbf C^*$,
\item or
$\aA \in \mathbf A^*$,\qquad $\bB \in \mathbf B\backslash \mathbf B'$,\qquad $\cC \in \mathbf C^*$,
\item or
$\aA \in \mathbf A^*$,\qquad $\bB \in \mathbf B^*$,\qquad $\cC \in \mathbf C\backslash \mathbf C'$.
\end{itemize}
Moreover, by Lemmas~\ref{l:3flow} and~\ref{l:2td}, such \STS$(21)$ has exactly $7$ sub-\TD$(3,6)$ if and only if $\tT$ is divided into  sub-\TD$(3,3)$
and
\begin{eqnarray}\label{eq:ABC}
\aA \cup A_{001} \in \mathbf A'', \qquad
\bB \cup A_{001} \in \mathbf B'', \qquad % \mbox{and }
\cC \cup A_{001} \in \mathbf C'';
\end{eqnarray}
and it has exactly $3$ sub-\TD$(3,6)$ if and only if $\tT$ is divides into  sub-\TD$(3,3)$
and
exactly two of \eqref{eq:ABC} are satisfied.
We exclude these cases
and finally have
at most $120^3+3\cdot 720\cdot 120^2$ \STS$(21)$ with only one sub-\TD$(3,6)$, equal to $\tT$.
Using the graph-isomorphism software \cite{nauty2014}, we can check all of them on isomorphism and keep
the representatives. Trivially, any STS$(21)$ that has a sub-TD isomorphic to $\tT$
is isomorphic to some STS$(21)$ that includes $\tT$.
Repeating the steps above for each of $12$ nonisomorphic choices of $\tT$, we find all equivalence classes
of \STS$(21)$ with only one sub-\STS$(9)$.

Similarly, we can classify the \STS$(21)$ with $3$ or $7$ sub-\STS$(9)$.
The only difference is that we also need to check
for isomorphism between the representatives
obtained from different $\tT$.

\begin{table}
\newcommand\ttss[2]{$\stackrel{\mbox{$\tau_{6}=#1$}}{\sigma_9=#2}$}
\begin{tabular}{c||c|c|c|c|c|c|c|}
$|\mathrm{Aut}|$ & \ttss{7}{7}  & \ttss33 & \ttss31 & \ttss13 & \ttss11 & \ttss01 \\
\hline\hline
1   &       & 98 (0) & 171 (0)        &  101621 (355) &  1865036 (0)     &  12656035473     \\  % &  12657900680-  \\
2   &       & 45 (0) &  36 (0)        &  5271 (14)    &  30771  (0)      &  3461498         \\  % &  3492305-      \\
3   &       & 37 (0) &  66 (0)        &  103 (8)      &  52     (0)      &  14932           \\  % &  15050-        \\
4   &       & 18 (0) &  14 (0)        &  321 (1)      &  786    (0)      &  10328           \\  % &  11128-        \\
6   & 1 (1) & 31 (0) &  45 (0)        &  24 (1)       &  8      (0)      &  157             \\  % &  210-          \\
8   & 1 (0) & 7  (0) &   1 (0)        &  60 (5)       &  23     (0)      &  130             \\  % &  154-          \\
9   & 1 (1) &        &   9 (0)        &               &                  &  12              \\  % &  21-           \\
12  & 1 (1) & 6  (0) &   8 (0)        &  5 (0)        &  5      (0)      &  60              \\  % &  73-           \\
14  & 1 (0) &        &                &               &                  &                  \\  %   \\
16  & 1 (0) & 2  (0) &                &  9 (1)        &                  &                  \\  %   \\
18  & 2 (1) &        &   3 (0)        &  1 (0)        &                  &  6               \\  % &  9-            \\
24  &       &        &                &  7 (3)        &  1      (0)      &  11              \\  % &  12-           \\
27  &       &        &                &               &                  &  3               \\  %   \\
36  &       &        &   1 (0)        &  1 (0)        &                  &  3               \\  % &  4-            \\
48  &       &        &                &  2 (0)        &                  &                  \\  %   \\
54  & 1 (0) &        &                &               &                  &                  \\  %   \\
72  &       &        &   1 (0)        &  1 (0)        &                  &  3               \\  % &  4-            \\
108 & 1 (0) &        &                &               &                  &                  \\  %   \\
144 &       &        &                &  1 (0)        &                  &                  \\  %   \\
504 & 1 (0) &        &                &               &                  &                  \\  %   \\
1008& 1 (1) &        &                &               &                  &                  \\\hline
any & 12 (5)& 244 (0)& 355 (0)        & 107427 (388)  & 1896682 (0)      & 12659522616
\end{tabular}
 \caption{The number of the isomorphism classes of \STS$(21)$ with sub-\TD$(3,6)$, sorted by the number $\tau_6$ of
 sub-\TD$(3,6)$, the number $\sigma_9$ of  sub-\STS$(9)$, and the number of automorphisms
 (the number or nonisomorphic resolvable systems, if known, is given in parenthesis). }
 \label{t:1}
\end{table}

The results of the calculation are reflected in Table~\ref{t:1}.
The last column of the table was calculated by comparing
the data in the other columns with the results of \cite{KOP:2015:subSTS}.
All calculations took few core-hours on a modern PC.
We summarize the results in the following theorem.

\begin{theorem}
There are $2004720$
Steiner triple systems of order $21$ with transversal subdesigns on $3$ groups of size $6$.
$2004109$ of them have exactly one such sub-\TD$(3,6)$, 
$599$ have exactly three  sub-\TD$(3,6)$, and $12$ have seven sub-\TD$(3,6)$
(by Lemma~\ref{l:3flow}, the last group coincides with the $12$ \STS$(21)$
having $7$ sub-\STS$(7)$, found in \cite{KOP:2015:subSTS}).
\end{theorem}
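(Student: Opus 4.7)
The plan is to execute, as a proof, the computer-aided enumeration scheme already set up by Lemmas~\ref{l:TD-flow}, \ref{l:3flow}, and \ref{l:2td}. First I would fix, in turn, each of the $12$ isomorphism class representatives $\tT$ of a TD$(3,6)$, placed on a labeled point set partitioned as $A_{001},A_{010},A_{011},A_{100},A_{101},A_{110},A_{111}$ (three-element cells), with the labeling chosen so that whenever $\tT$ splits into four sub-TD$(3,3)$ the splitting is compatible with the labels (Proposition~\ref{p:td6}). By Lemma~\ref{l:TD-flow}, every STS$(21)$ containing $\tT$ as a sub-TD$(3,6)$ is obtained by attaching, on each of the three petal-plus-stem sets, either a full sub-STS$(9)$ or an almost-sub-STS$(9)$ missing the triple $A_{001}$; by Lemma~\ref{l:3w+3}, exactly one of the three attached pieces is a full sub-STS$(9)$ unless $A_{001}$ itself is a block, in which case all three are. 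This yields the four disjoint enumeration cases displayed just before the theorem and bounds the raw candidate set by $120^3+3\cdot 720\cdot 120^2$ per choice of $\tT$.

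Next I would stratify the candidates by $\tau_6$, the number of sub-TD$(3,6)$ they contain. Lemma~\ref{l:2td} shows that a second sub-TD$(3,6)$ appears only when $\tT$ itself splits into four sub-TD$(3,3)$ and at least one attached petal realizes a further refinement into two blocks; combined with Lemma~\ref{l:3flow}, this translates into the cleanly testable condition that $\tau_6$ is determined by how many of $\aA\cup\{A_{001}\}\in\mathbf A''$, $\bB\cup\{A_{001}\}\in\mathbf B''$, $\cC\cup\{A_{001}\}\in\mathbf C''$ hold: all three give $\tau_6=7$, exactly two give $\tau_6=3$, and otherwise $\tau_6=1$. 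I would enumerate the rare $\tau_6\in\{3,7\}$ candidates explicitly, subtract them from the main pool, and then canonise the remaining $\tau_6=1$ candidates with nauty to collapse duplicates produced from the same $\tT$.

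Repeating the procedure for all $12$ choices of $\tT$, I would assemble the totals. For $\tau_6=1$ systems the unique sub-TD$(3,6)$ determines $\tT$ up to isomorphism, so distinct $\tT$'s contribute disjoint orbits and per-$\tT$ counts simply add. For $\tau_6\in\{3,7\}$ systems the same STS$(21)$ is produced by several non-isomorphic starting $\tT$, so I would pool the canonical forms across all $12$ runs and deduplicate globally. Summing the columns of Table~\ref{t:1} then gives $107427+1896682=2004109$ systems with $\tau_6=1$, $244+355=599$ with $\tau_6=3$, and $12$ with $\tau_6=7$, totalling $2004720$. The final identification of the twelve $\tau_6=7$ systems with the STS$(21)$ from \cite{KOP:2015:subSTS} having seven sub-STS$(7)$ follows from Lemma~\ref{l:3flow} together with the Fano-plane structure on $\{A_{001},\ldots,A_{111}\}$ noted in the remark immediately after that lemma, matched against the classification of \cite{KOP:2015:subSTS}.

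The main obstacle I expect is not raw compute time but bookkeeping rigour: one must argue, not merely observe computationally, that the removal of $\tau_6\ge 3$ candidates from the $\tau_6=1$ pool is exhaustive, and that no $\tau_6\in\{3,7\}$ system is over- or under-counted when produced by distinct $\tT$'s. The structural control given by Lemmas~\ref{l:3flow} and~\ref{l:2td} precisely covers the first issue, and the cross-run canonisation covers the second. As an independent safeguard I would cross-validate with the double-counting identity to be spelled out in Section~\ref{s:valid} and with the known total of STS$(21)$ containing a sub-STS$(9)$ from \cite{KOP:2015:subSTS}, which populates the last column of Table~\ref{t:1}.
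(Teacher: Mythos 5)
Your proposal reproduces the paper's own enumeration scheme essentially verbatim: fixing the $12$ representatives $\tT$, decomposing via Lemmas~\ref{l:TD-flow} and~\ref{l:3w+3} into the four cases bounded by $120^3+3\cdot720\cdot120^2$, stratifying by $\tau_6$ using Lemmas~\ref{l:3flow} and~\ref{l:2td}, canonising with nauty (with global deduplication only needed for $\tau_6\in\{3,7\}$), and cross-checking against the double-counting identity of Section~\ref{s:valid}. This is correct and is the same approach the paper takes.
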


%=================================================
%=================================================
%=================================================

\section{Validity of the results} \label{s:valid}
In this section, we consider a double-counting argument that validates the results of computing.

\begin{proposition}\label{p:pairs}
Given a point set $S$ of size $21$, there are exactly
 $$ \frac{21!}{3! ^2 \cdot 6!^3} \cdot (120^3 + 3\cdot 720\cdot 120^2) \cdot 812851200
 =101473423278637842432000000$$
 pairs $(\bB, \tT)$, where $(S,\bB)$ is an \STS$(21)$ and $\tT$ is a sub-\TD$(3,6)$ of $\bB$.
\end{proposition}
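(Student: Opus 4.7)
The plan is to compute the number of pairs $(\bB,\tT)$ as a product of three independent multiplicative factors that match the three factors appearing in the claimed formula: a set-theoretic factor for placing the stem and petals inside $S$, a factor for choosing the block set of a \TD$(3,6)$ on fixed groups, and a factor for the \STS$(21)$ extending a fixed \TD$(3,6)$.

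First I would observe that specifying a sub-\TD$(3,6)$ $\tT$ on $S$ amounts to choosing a $3$-subset $D$ of $S$ (necessarily the complement of the support of $\tT$), an unordered partition $\{A,B,C\}$ of $S\setminus D$ into three $6$-subsets (the groups of $\tT$), and then a block set realizing the \TD$(3,6)$. The first two choices contribute $\binom{21}{3}\cdot\binom{18}{6,6,6}/3!=21!/(3!^2\cdot 6!^3)$. For the third choice, the block sets of \TD$(3,6)$'s on a fixed triple of groups are in bijection with $6\times 6$ latin squares (for any ordering of the groups, the block $\{a,b,c\}$ with $a\in A$, $b\in B$, $c\in C$ corresponds to the entry $c$ in row $a$, column $b$, and since blocks are sets the count is independent of the chosen ordering), contributing the classical value $L(6)=812851200$.

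The substantive step is to show that for a fixed \TD$(3,6)$ $\tT$ with groups $A,B,C$ and stem $D:=S\setminus(A\cup B\cup C)$, the number of \STS$(21)$ on $S$ containing $\tT$ equals $120^3+3\cdot 720\cdot 120^2$. By Lemma~\ref{l:TD-flow}, any such $\bB$ yields a flower $\{A,B,C,D\}$, and by Lemma~\ref{l:3w+3} the set $\bB\setminus\tT$ partitions according to the supports $A\cup D$, $B\cup D$, $C\cup D$, with one of the three parts being a sub-\STS$(9)$ and the other two being almost-sub-\STS$(9)$'s with missing triple $D$. I would then split on whether $D$ is a block of $\bB$. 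If $D\in\bB$, the three parts extend uniquely to three sub-\STS$(9)$'s on $A\cup D$, $B\cup D$, $C\cup D$, each containing the block $D$, and $\bB$ is reconstructed as the union of these three \STS$(9)$'s with $\tT$; since exactly $120$ of the $840$ \STS$(9)$'s on a labeled $9$-point set contain any prescribed block, this case yields $120^3$ systems. If $D\notin\bB$, Lemma~\ref{l:3w+3} makes the decomposition unique, so one chooses which petal-plus-stem supports the sub-\STS$(9)$ ($3$ ways), then an \STS$(9)$ on that set not containing $D$ as a block ($840-120=720$ choices), and finally two almost-sub-\STS$(9)$'s with missing triple $D$ on the other two petal-plus-stem supports ($120$ choices each), for $3\cdot 720\cdot 120^2$ systems. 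The two cases are disjoint according to whether $D\in\bB$, so the contributions add to $120^3+3\cdot 720\cdot 120^2$.

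Multiplying the three factors yields the claimed number. The point requiring the most care is verifying that each of these constructions really produces an \STS$(21)$ and that distinct data produce distinct systems: a pair-count confirms $36+12+12+12-2=70=\binom{21}{2}/3$ blocks in case~(i), with $D$ the common block of the three sub-\STS$(9)$'s, and $36+12+11+11=70$ in case~(ii); and the pairs inside $D$ are covered exactly once—by the block $D$ itself in case~(i), and by three blocks of the unique sub-\STS$(9)$ in case~(ii). Uniqueness of the reconstruction is immediate in case~(i), since the sub-\STS$(9)$ on each support $X\cup D$ is determined by $\bB$, and in case~(ii) follows from the ``exactly one sub-\STS$(9)$'' clause of Lemma~\ref{l:3w+3}.
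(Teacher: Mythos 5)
Your proposal is correct and follows essentially the same route as the paper: the same factorization into the flower-partition count $21!/(3!^2\cdot 6!^3)$, the case split on whether $D$ is a block giving $120^3+3\cdot 720\cdot 120^2$, and the latin-square count $812851200$. Your write-up is in fact somewhat more careful than the paper's, as it explicitly verifies that each choice of data reconstructs a valid \STS$(21)$ and that the correspondence is bijective.
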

\begin{proof}
We first remind that there are $840$ different \STS$(9)$ with given support, see, e.g., \cite{A030128};
a given triple of points belongs to exactly $120$ of them.

A set of cardinality $21$ can partitioned into a flower $\{A,B,C,D\}$ in
 \mbox{${21!}\cdot {3!}^{-2} \cdot {6!}^{-3}$}
 ways.
 Assuming that $D$ is a block, we can choose an almost-sub-STS with each of the supports
 $A \cup D$, $B \cup D$, $C \cup D$ in $120$ ways.
 Assuming that the $D$ is \emph{not} a block,
 we can choose which of $A \cup D$, $B \cup D$, $C \cup D$
 is the support of a sub-STS in $3$ ways,
 then choose that sub-STS in $840-120=720$ ways,
 then choose each of the remaining two almost-sub-STS's
 in $120$ ways.
 Finally, we choose a transversal design with the groups
 $A$, $B$, $C$ in $812851200$ ways 
 (the total number of different $6\times 6$ latin squares \cite{A002860}).
\end{proof}

On the other hand, we can calculate the same number based on the given representatives of the isomorphism classes.

\begin{proposition}\label{p:pairs2}
Let $S$ be a set of $21$ points and let $\mathbf S$ be a set of representatives of all isomorphism
classes of \STS$(21)$ on $S$. The number of
pairs $(\bB, \tT)$ where $(S,\bB)$ is an \STS$(21)$ and $\tT$ is a sub-\TD$(3,6)$ of $\bB$
is calculated by the formula
\begin{equation}\label{eq:sum}
 \sum_{\bB \in \mathbf S} N(\bB)\cdot \frac{21!}{|\mathrm{Aut(\bB)}|},
\end{equation}
where $N(\bB)$ is the number of sub-\TD$(3,6)$ in $\bB$.
\end{proposition}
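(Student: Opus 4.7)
The plan is a standard orbit-counting argument: partition the set of pairs $(\bB,\tT)$ according to the isomorphism class of $\bB$, count the pairs in each class, and sum.

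First, I would fix an isomorphism class with representative $\bB\in\mathbf S$ and count the labeled STS$(21)$ on the point set $S$ that are isomorphic to $\bB$. The symmetric group $\mathrm{Sym}(S)$ of order $21!$ acts on the set of all STS$(21)$ on $S$ by relabeling points, and the orbit of $\bB$ under this action is precisely the set of STS$(21)$ on $S$ isomorphic to $\bB$. The stabilizer of $\bB$ in this action is by definition $\mathrm{Aut}(\bB)$. By the orbit--stabilizer theorem, the number of labeled STS$(21)$ on $S$ isomorphic to $\bB$ equals $21!/|\mathrm{Aut}(\bB)|$.

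Next, I would observe that the number of sub-\TD$(3,6)$ contained in an STS$(21)$ is an isomorphism invariant: any isomorphism $\varphi\colon \bB\to\bB'$ induces a bijection $\tT\mapsto \varphi(\tT)$ between the sub-\TD$(3,6)$'s of $\bB$ and those of $\bB'$. Hence every labeled STS $\bB'$ on $S$ that is isomorphic to $\bB$ contributes exactly $N(\bB)$ pairs $(\bB',\tT)$ to the total count. Multiplying and summing over isomorphism classes yields precisely the formula~\eqref{eq:sum}.

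The argument is essentially routine; the only subtlety to keep explicit is that $\mathbf S$ gives one representative per isomorphism class, so the orbits over $\bB\in\mathbf S$ partition the set of all STS$(21)$ on $S$ without overlap, and the counts $N(\bB)$ and $|\mathrm{Aut}(\bB)|$ do not depend on the chosen representative.
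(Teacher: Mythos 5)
Your argument is correct and is exactly the standard orbit--stabilizer double count that the paper leaves implicit (the proposition is stated without proof there, being regarded as routine). You correctly identify the two points that need checking --- that the orbit of $\bB$ under $\mathrm{Sym}(S)$ has size $21!/|\mathrm{Aut}(\bB)|$ and that $N(\bB)$ is an isomorphism invariant --- so nothing is missing.
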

Using the data in Table~\ref{t:1}, we can compute the nonzero (with $N(\bB)>0$) terms in the sum \eqref{eq:sum},
which happens to coincide with the value in Proposition~\ref{p:pairs}.
This approves the results of our computing.

\section{Resolvability}\label{s:resol}
A Steiner triple system $(S,\bB)$ is called
\emph{resolvable} if $\bB$ can be partitioned into parallel classes,
where a \emph{parallel class} is a partition of $S$ into blocks.
We check all found systems on resolvability and found $393$
isomorphism classes of resolvable STS of considered type.
As we see from Table~\ref{t:1},
there is no resolvable STS$(21)$ with sub-\TD$(3,6)$
and only one sub-\STS$(9)$.
We can prove this fact theoretically.

\begin{proposition}\label{th:resol}
 If an \STS$(21)$ 
 % $(A\cup B\cup C\cup D,\dD)$ 
 has a sub-\TD$(3,6)$
 %$(A\cup B\cup C,\{A,B,C\},\tT)$ 
 and only one sub-\STS$(9)$ 
 %with support
 %$A\cup D$, $B\cup D$, or $C\cup D$, 
 then $\dD$ is not resolvable.
\end{proposition}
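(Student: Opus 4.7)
The plan is to assume resolvability and derive a contradiction by pigeonhole on the two blocks that partition $B$. First I would fix coordinates via Lemma~\ref{l:TD-flow} and Lemma~\ref{l:3w+3}: there is a flower $\{A,B,C,D\}$, and exactly one petal (say $A\cup D$) carries the unique sub-$\STS(9)$, while $B\cup D$ and $C\cup D$ carry almost-sub-$\STS(9)$'s with missing triple $D$. The uniqueness assumption forces $D\notin\bB$, for otherwise Lemma~\ref{l:3w+3} would produce three sub-$\STS(9)$'s. I would then catalog the $70$ blocks by type $(|b\cap A|,|b\cap B|,|b\cap C|,|b\cap D|)$: the $36$ blocks of $\tT$ have type $(1,1,1,0)$; a short count using $D\notin\bB_A$ shows that the sub-$\STS(9)$ on $A\cup D$ contains exactly $3$ blocks of type $(1,0,0,2)$ (one per pair in $D$), $6$ of type $(2,0,0,1)$, and $3$ of type $(3,0,0,0)$; and Proposition~\ref{p:sts9} applied to the parent $\STS(9)$ of $\bB_B$ (in which $D$ is a block) yields two blocks $B_0,B_1$ of type $(0,3,0,0)$ partitioning $B$ together with nine blocks of type $(0,2,0,1)$ forming a $\TD(3,3)$ on $D,B_0,B_1$; the decomposition of $\bB_C$ is analogous, with $C$ partitioned into $C_0,C_1$.

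Next, assuming a resolution into $10$ parallel classes, I would observe that since $D\notin\bB$, each class partitions $D$ either as $(1,1,1)$ (type~I) or as $(2,1)$ (type~II). A type-II class contains one block of type $(1,0,0,2)$, and no two such blocks can lie in the same class (they would account for $4$ distinct $D$-points), so there are exactly three type-II classes, one per $(1,0,0,2)$-block. The accompanying $1$-$D$-point block of a type-II class lies in $\bB_A$, $\bB_B$, or $\bB_C$. I would eliminate the last two options by a mod-$3$ check: in those cases the five remaining blocks, drawn from the $D$-free types $(3,0,0,0)$, $(0,3,0,0)$, $(0,0,3,0)$, $(1,1,1,0)$, would have to cover $(5,4,6,0)$ or $(5,6,4,0)$ points of $(A,B,C,D)$, but the corresponding linear systems $3a_3+t=5$, $3b_3+t=4$, $3c_3+t=6$ and its $B\leftrightarrow C$ mirror have incompatible residues modulo~$3$.

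Hence in every type-II class both $D$-point blocks lie in $A\cup D$, and the remaining five blocks cover $(3,6,6,0)$ points. The equations $3a_3+t=3$, $3b_3+t=6$, $3c_3+t=6$ with $a_3+b_3+c_3+t=5$ admit only the nonnegative integer solutions $(a_3,b_3,c_3,t)=(0,1,1,3)$ and $(1,2,2,0)$, both with $b_3\ge 1$, so each type-II class uses at least one of $B_0,B_1$. The three type-II classes then require at least three uses of blocks from the two-element set $\{B_0,B_1\}$, the desired contradiction. I expect the main delicate step to be the case analysis that eliminates the off-$A$ placement of the $1$-$D$-point block in a type-II class; once the mod-$3$ obstruction is in hand, the remaining pigeonhole is immediate.
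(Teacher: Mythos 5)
Your proof is correct and follows essentially the same route as the paper's: you isolate the three parallel classes meeting $D$ in a $(2,1)$ pattern, use a mod-$3$ residue obstruction to force both $D$-meeting blocks of such a class into $A\cup D$ (this is the paper's claim that the block through the third point of $D$ lies in the sub-\STS$(9)$), and then derive the contradiction by pigeonholing three classes onto the only two blocks of the almost-sub-STS that are contained in $B$ (Proposition~\ref{p:sts9}). The explicit block-type census and linear systems are just a more detailed bookkeeping of the same counting argument.
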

\begin{proof}
Let $(S,\dD)$ be an STS, 
 let $(A\cup B\cup C,\{A,B,C\},\tT)$ be a sub-\TD$(3,6)$,
 corresponding to the flower $\{A,B,C,D\}$ 
 and let $\aA$ be the unique sub-STS.
 Without loss of generality we assume
 that the support of $\aA$ is $A\cup D$.
 So, there are two almost-sub-STS with the supports 
 $B\cup D$ and $C\cup D$
 respectively, and the missing triple $D$.
 By the hypothesis, $D \not\in \dD$.

Let $D=\{a,b,c\}$. Seeking a contradiction, assume that there is a resolution.
Consider the block $U$ containing $a$ and $b$ and consider the parallel class $\pP$
containing this block. Denote $t:=|\pP \cap \tT|$.
We state that

(*) \emph{the block $V$ from $\pP$ containing $c$ belongs to $\aA$}.
Indeed, if it is in $\bB$, then $|B\backslash V|=4$, and $t$ of these $4$ points
are covered by blocks from $\tT\cap\pP$, the other $4-t$ being covered  by blocks from $\bB\cap\pP$.
Hence, $4-t \equiv 0\bmod 3$. On the other hand,
$t$ of the $6$ points of $C$
are covered by blocks from $\tT\cap\pP$, the other $6-t$ being covered  by blocks from $\cC\cap\pP$.
So, $6-t \equiv 0\bmod 3$, a contradiction. Similarly, $V\not\in \cC$, and (*) holds.

Since $|A\backslash U \backslash V|=3$, we have $t\le 3$.
Therefore, $\pP$ contains at least one block
from $\bB$ and at least one block from $\cC$,
and these blocks have no points in $D$.
The same can be said about the parallel class that contains the block with $a$ and $c$.
And similarly, for the parallel class that contains the block with $b$ and $c$.
We conclude that $\bB$ has at least three blocks disjoint with $D$.
This contradicts to Proposition~\ref{p:sts9}.
\end{proof}

\section{Acknowledgments}
The authors are grateful to Svetlana Topalova for useful discussion.
\bibliographystyle{plain}
\bibliography{../../k}
\end{document}